\documentclass{article}
\usepackage[UTF8]{ctex}
\usepackage{graphicx}
\usepackage{subcaption}
\usepackage{amsmath, amssymb, amsthm}
\usepackage{mathtools}
\usepackage{bm}
\usepackage{xcolor}
\usepackage{geometry}
\usepackage{tikz}
\usetikzlibrary{calc}
\usepackage{mathrsfs}
\usepackage{textcomp}
\newtheorem{theorem}{Theorem}
\newtheorem{corollary}[theorem]{Corollary}
\newtheorem{lemma}[theorem]{Lemma}
\newtheorem{proposition}[theorem]{Proposition}

\newtheorem{construction}[theorem]{Construction}
\newtheorem{problem}[theorem]{Problem}

\newcommand{\bj}{\bm{j}}

\title{Tight lower bound for the spectral radius of connected graphs \\ with given matching number}
\author{Xinmin Hou$^{1,2}$ \quad and \quad Li Tan$^1$\\
\small $^{1}$School of Mathematical Sciences,\\
\small University of Science and Technology of China, Hefei 230026, Anhui, China\\
\small $^{2}$ Hefei National Laboratory,\\
\small University of Science and Technology of China, Hefei 230088, Anhui, China\\
\small Email: \texttt{xmhou@ustc.edu.cn (X. Hou)},  \texttt{pb22010411@mail.ustc.edu.cn (L. Tan)}}

\date{}

\begin{document}

\maketitle

\begin{abstract}
Let $\mathscr{G}_{n,k}$ denote the family of all connected graphs of order $n$ with matching number $k$. Liu, Lou, and Trevisan~(Linear
Algebra Appl., 2026) posed the following problem: 
Determine the  spectrally minimal graphs in $\mathscr{G}_{n,k}$.  In this paper we prove that for every graph $G \in \mathscr{G}_{n,k}$, 
$
  \rho(G) \ge \sqrt{\frac{n + 2k - 3}{k}},
$
and we completely characterize the extremal graphs when $k \mid (n-3)$. 
As applications, we establish $\rho(G) + k \ge 3\sqrt[3]{n/4}$ for $k \ge 2$, settling the asymptotic order of $\rho + k$ as $\Theta(n^{1/3})$ --- strictly smaller than the $\Theta(\sqrt{n})$ order suggested by the disproved Aouchiche--Hansen conjecture. 
\end{abstract}

\vspace{2mm}
\noindent\textbf{Keywords:} spectral radius; matching number; extremal graph; lower bound; Schur complement; vertex cover\\
\noindent\textbf{MSC2020:} 05C50, 05C35

\section{Introduction}

The \emph{spectral radius} of a graph $G$, denoted $\rho(G)$, is the largest eigenvalue of its adjacency matrix $A(G)$. Spectral extremal graph theory, initiated by the classical Brualdi--Solheid problem, seeks to determine how graph parameters constrain the spectral radius and to characterize the graphs that attain the extremal values. Among the many parameters studied, the \emph{matching number} $\mu(G)$ --- the size of a maximum matching --- is of particular interest because of its connections to Hamiltonicity, factor-critical structures, and the total $\pi$-electron energy in chemical graph theory.

For the \emph{upper bound}, Feng, Yu and Zhang~\cite{Feng2007} gave a complete answer. Let $\mathscr{G}_{n,k}$ be the family of connected graphs of order $n$ with matching number $k$. Then:
\begin{itemize}
    \item If $n = 2k$ or $2k+1$, then $\rho(G) \le n-1$, with equality iff $G \cong K_n$.
    \item If $2k+2 \le n < 3k+2$, then $\rho(G) \le 2k$, with equality iff $G \cong K_{2k+1} \cup \overline{K}_{n-2k-1}$.
    \item If $n = 3k+2$, then $\rho(G) \le 2k$, with equality iff $G \cong K_k \vee \overline{K}_{n-k}$ or $G \cong K_{2k+1} \cup \overline{K}_{n-2k-1}$.
    \item If $n > 3k+2$, then $\rho(G) \le \tfrac{1}{2}\bigl(k-1 + \sqrt{(k-1)^2 + 4k(n-k)}\bigr)$, with equality iff $G \cong K_k \vee \overline{K}_{n-k}$.
\end{itemize}


These results on spectral radius upper bounds under constraints of matching number or forbidden paths were later extended to the $A_\alpha$-spectral radius by Yuan and Shao~\cite{Yuan2022}, and to graphs with both bounded clique number and matching number by Wang, Hou and Ma~\cite{Wang2023}. Early work by Hou and Li~\cite{HouLi2002} provided upper bounds on the spectral radius of trees with fixed size and matching number, and Zhai, Xue, and Liu~\cite{ZhaiXueLiu2022} determined maximal $Q$-spectral radii for graphs with fixed size and matching number (see also~\cite{Chang2003,Cioaba2005,LinGuo2007}).

The \emph{lower bound} problem --- determining the minimum spectral radius in $\mathscr{G}_{n,k}$ --- has received comparatively little attention. Sun, Sun, Guo and Tan~\cite{Sun2017} studied the minimal spectral radius of trees for the special case $k \mid (n-1)$ with matching number $k\le 4$.Collatz~\cite{Collatz1957} observed that the star $K_{1,n-1}$ (matching number $1$) has spectral radius $\sqrt{n-1}$, while the path $P_n$ (matching number $\lfloor n/2 \rfloor$) has spectral radius approximately $2$. This suggests that graphs achieving the minimum spectral radius for a given $k$ should have a tree-like structure, and indeed the work of Li and Chang~\cite{Li2014} on studying the extremal graphs for the minimum of the Laplacian spectral radius when the matching number is fixed indicates that extremal graphs typically exhibit ``broom'' or ``caterpillar'' shapes. This leads us to consider that the extremal graphs that minimize the spectral radius of the adjacency matrix among graphs with a given matching number also have similar properties.

The most closely related recent work is that of Liu, Lou and Trevisan~\cite{Liu2026}, who initiated a systematic study of the minimum spectral radius in $\mathscr{G}_{n,k}$.  They posed the following core problem on this subject. 
\begin{problem}[\cite{Liu2026}]\label{P1}
      For the family \(\mathcal{G}_{n,k}\) of graphs of order \(n\) and matching number $k$, what are the spectrally minimal graphs? 
\end{problem}
For $k\le 4$, the authors in \cite{Liu2026} classified all extremal constructions. However, their paper does not provide a general lower bound formula valid for arbitrary $k$, nor does it characterize the extremal structure for large $k$. 

In this article, we present a sharp lower bound valid for all connected graphs with given matching number. To show the lower bound is tight, we construct an explicit family of extremal graphs.

\begin{construction}[Extremal family $\mathcal{T}_{n,k}$]\label{prop:extremal}
Let $k,n$ be positive integers with $k \mid (n-3)$ and $n \ge 2k$. Let $c := (n+2k-3)/k$.
For a tree $T_1$ on a vertex set $X = \{x_1,\dots,x_k\}$ satisfying $2d_{T_1}(x_i) \le c$ for all $i$, construct a tree in the family $\mathcal{T}_{n,k}$ as follows:
\begin{enumerate}
    \item Subdivide every edge of $T_1$ by inserting a new vertex into each edge. For $e = x_i x_j \in E(T_1)$, let $y_e$ denote the inserted vertex and set $Y = \{y_e : e \in E(T_1)\}$, so $|Y| = k-1$. The resulting graph has no edges within $X$; each $y_e \in Y$ is adjacent exactly to $x_i$ and $x_j$.
    \item Attach $f_i$ pendant leaves to each $x_i \in X$, where the non-negative integers $f_1,\dots,f_k$ are defined by
    \[
      2d_{T_1}(x_i) + f_i = c \quad \text{for all } i=1,\dots,k.
    \]
    (These are consistent since $\sum_{i=1}^k f_i = kc - 4(k-1) = n - 2k + 1$.)  
\end{enumerate}
Note that the condition $2d_{T_1}(x_i) \le c$ guarantees $f_i \ge 0$. For small $n$, this condition may fail for certain trees $T_1$ (for example, a star requires $c \ge 2k-2$, which is stronger than $n \ge 2k$). Actually, when $k\ge 4$, for every pair $(n,k)$ with $n\ge 2k$ and $k\mid(n-3)$, there exists at least one tree $T_1$ attaining the extremal value; for instance, when $n=2k+3$, one can take $T_1$ to be a path.
\end{construction}

\begin{figure}[htbp]
    \centering
    \begin{subfigure}{0.48\textwidth}
        \centering
        \includegraphics[width=\linewidth]{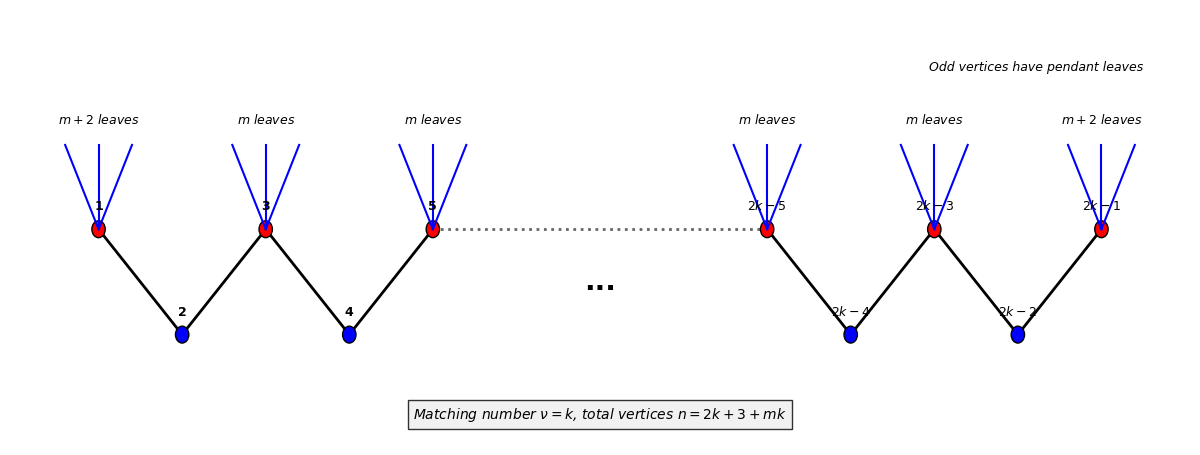}
        \caption{$T_{n,k}$ for $T_1$ being a path.}
        \label{fig:path}
    \end{subfigure}%
    \hfill
    \begin{subfigure}{0.48\textwidth}
        \centering
        \includegraphics[width=\linewidth]{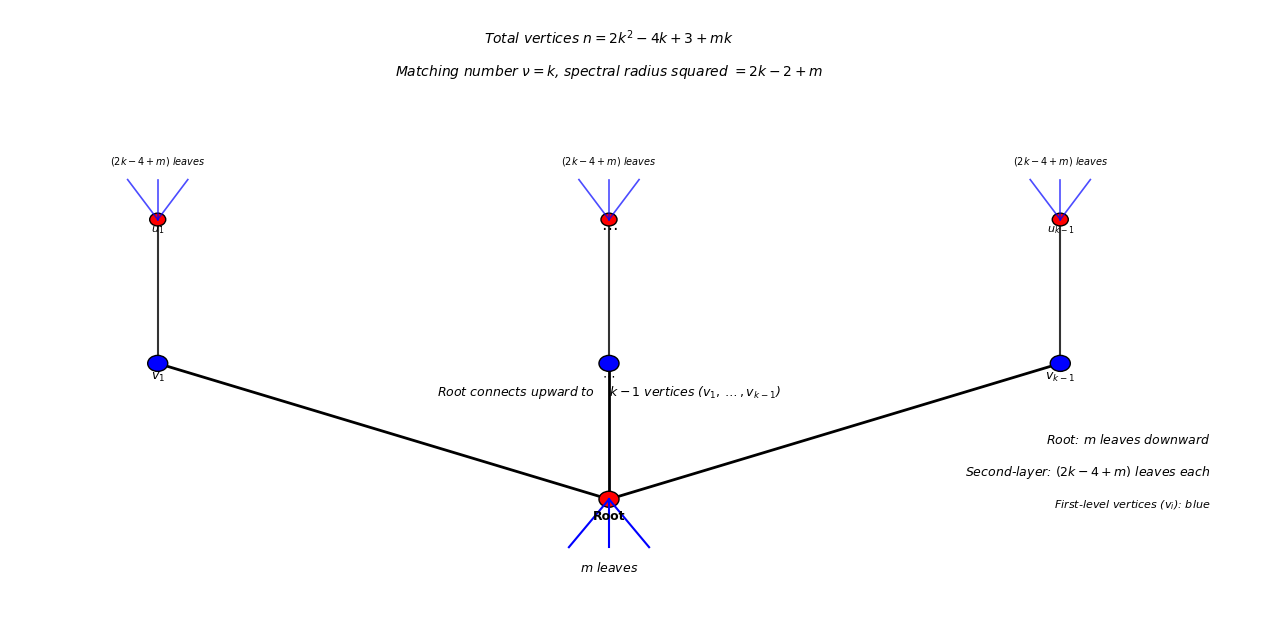}
        \caption{$T_{n,k}$ for $T_1$ being a star.}
        \label{fig:star}
    \end{subfigure}
\caption{$T_{n,k}$ for $T_1$ being a path or a star.}    
\end{figure}


\begin{figure}[htbp]
    \centering
    \includegraphics[width=0.6\linewidth]{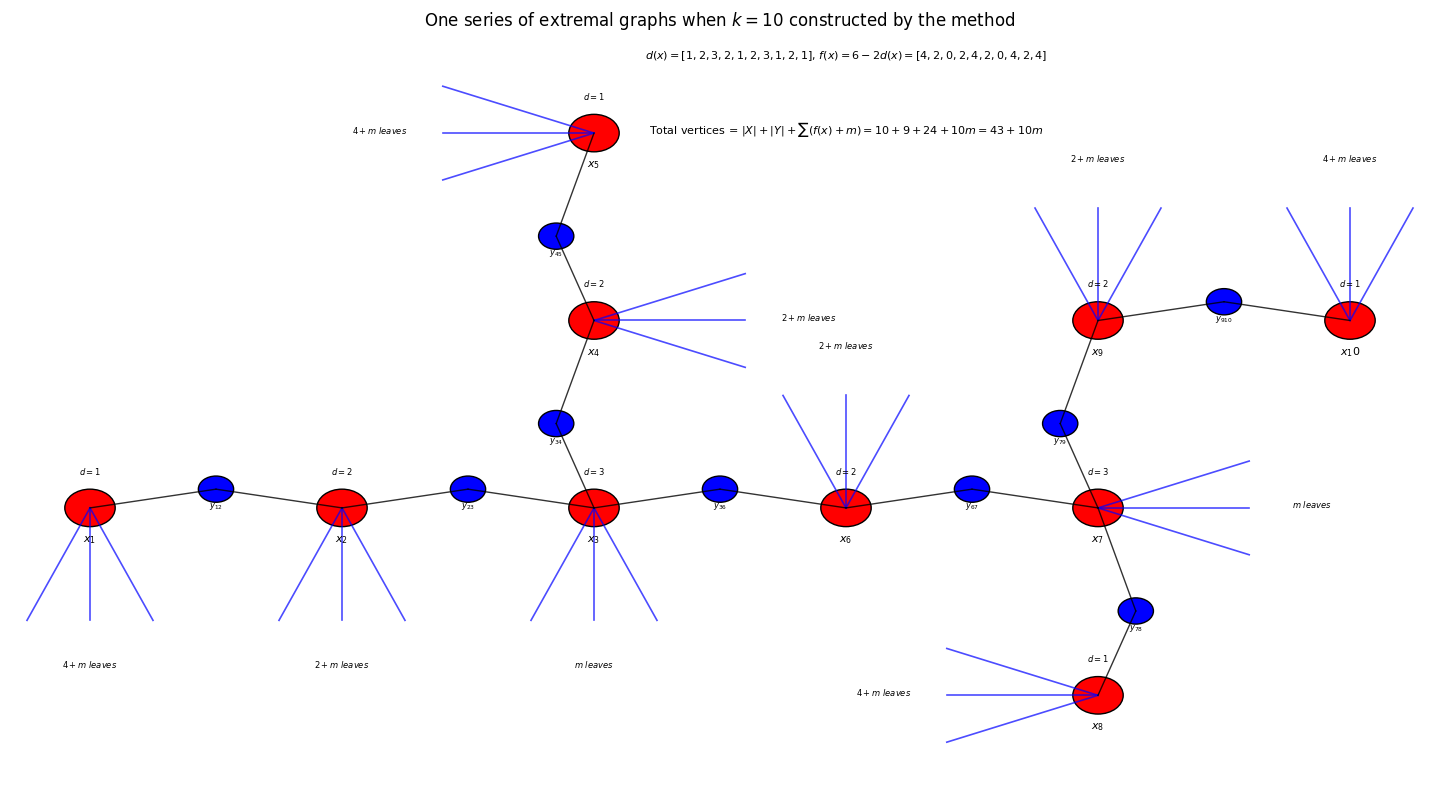}
    \caption{An extremal graph for $k=10$ with a non-path, non-star tree $T_1$.}
    \label{fig:general}
\end{figure}

\begin{theorem}\label{thm:main}
Let $G$ be a connected graph of order $n$ with matching number $k \ge 1$. Then
\begin{equation}\label{eq:main}
  \rho(G) \ge \sqrt{\frac{n + 2k - 3}{k}}.
\end{equation}
The equality holds if and only if $G\in\mathcal{T}_{n,k}$.
\end{theorem}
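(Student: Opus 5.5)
We reduce first to trees and then to a quadratic-form estimate built on a vertex cover. Since $\rho$ strictly decreases when an edge is deleted from a connected graph, and $G$ has a spanning tree whose matching number is at most $k$, it is enough to prove $\rho(T)\ge\sqrt{(n+2k'-3)/k'}$ for every tree $T$ with matching number $k'\le k$. Note that $(n+2k-3)/k = 2+(n-3)/k$ is nonincreasing in $k$ when $n\ge 3$. So the bound for $k'$ implies the bound for $k$, and equality would force $G$ to be a tree with matching number exactly $k$. (The cases $n\le 2$ are trivial.) From now on, $T$ is a tree with matching number $k$. By K\"onig's theorem, since $T$ is bipartite, it has a vertex cover $X$ with $|X|=k$, and $Y=V\setminus X$ is an independent set of size $n-k$.

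The key step uses the bipartite-type structure with respect to $X$, which explains the Schur complement keyword. Take the Perron vector $x$ of $A=A(T)$ and write $\rho^2 x = A^2 x$. Restricted to $X$, the eigen-equation gives $\rho x_X = A_{XX}x_X + A_{XY}x_Y$ and $\rho x_Y = A_{YX}x_X$, since $Y$ is independent. Eliminating $x_Y$ gives
\[
\rho^2 x_X = \rho A_{XX}x_X + A_{XY}A_{YX}x_X .
\]
Hence $\rho^2$ is at least the spectral radius of $M:=A_{XY}A_{YX}$, which is a $k\times k$ nonnegative matrix. (If $A_{XX}\neq 0$ the inequality only improves, and equality will force $A_{XX}=0$.) The entries of $M$ are $M_{ii}=d_Y(x_i)$ and $M_{ij}=|N_Y(x_i)\cap N_Y(x_j)|$. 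Summing all entries and testing $M$ on the all-ones vector gives
\[
\rho(M)\ \ge\ \frac{\mathbf 1^{T}M\mathbf 1}{k}=\frac{\sum_{y\in Y} d_X(y)^2}{k}.
\]
It therefore remains to show $\sum_{y\in Y} d(y)^2\ge n+2k-3$ for a tree with vertex cover $X$ of size $k$, where $d(y)=d_X(y)$ because $Y$ is independent. The edges number $n-1=e(X)+\sum_y d(y)$. Using $d^2\ge 2d-1$ for $d\ge1$, with equality iff $d\in\{1,2\}$, we get
\[
\sum_y d(y)^2\ \ge\ 2(n-1-e(X))-(n-k) = n+k-2-2e(X).
\]
This is weak when $e(X)>0$, so $e(X)>0$ has to be handled by a different argument. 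When $e(X)=0$, the bound $\sum_y (d(y)^2-2d(y)+1)\ge 0$ is not enough either: we need $\sum_y d(y)^2\ge n+2k-3$. Counting gives $\sum_y d(y)=n-1$ and $\sum_y (d(y)-1)=k-1$, so $\sum_y d(y)^2=\sum_y\bigl(d(y)-1\bigr)^2+2(n-1)-(n-k)$, and $\sum_y (d(y)-1)^2\ge\sum_y (d(y)-1)=k-1$. Together these give exactly $n+2k-3$, with equality iff every $d(y)\in\{1,2\}$. In that case $T$ is exactly a subdivided tree $T_1$ on $X$ with pendant leaves attached.

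The main obstacle is the case $e(X)>0$, together with making the Rayleigh step tight. For $e(X)>0$ I would first try to replace $X$ by another minimum vertex cover that is independent. If that fails, I would handle the edges inside $X$ by keeping the term $\rho A_{XX}$: test $\rho^2\,\mathbf 1^{T}x_X$-type quantities with $\mathbf 1$, which adds $2\rho\, e(X)/k\ge 4e(X)/k$ (using $\rho\ge2$ for $k\ge2$) and compensates the deficit. For equality, the Rayleigh bound $\rho(M)\ge \mathbf 1^{T}M\mathbf 1/k$ is tight iff $M$ has constant row sums $\sum_{y\sim x_i} d(y)=c$. Given $d(y)\in\{1,2\}$, this means $2d_{T_1}(x_i)+f_i=c$, which is precisely $\mathcal T_{n,k}$. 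Conversely, for such trees $\mathbf 1$ is a Perron vector of $M$, and lifting it gives an eigenvector of $A$ with eigenvalue $\sqrt c$, so equality holds.
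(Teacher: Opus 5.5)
Your framework is the same as the paper's. Eliminating $x_Y$ from the eigen-equations and testing $\rho A_{XX}+A_{XY}A_{YX}$ against $\mathbf 1$ gives exactly the paper's Schur-complement inequality $\rho^2k\ge 2\rho\,e(X)+\sum_{y}d_X(y)^2$. Your reduction to trees via monotonicity of $2+(n-3)/k$ in $k$ is a valid, leaner substitute for the spanning-tree lemma. So is your count $\sum_y(d(y)-1)^2\ge\sum_y(d(y)-1)$, which replaces the core lemma and the auxiliary graph $\widetilde G$.

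The gap is the case $e(X)>0$, which you leave as a sketch resting on a false claim. It is not true that $\rho\ge 2$ for trees with $k\ge 2$: every path has $\rho<2$. For example, $P_4=abcd$ with the minimum vertex cover $X=\{b,c\}$ has $e(X)=1$ and $\rho=(1+\sqrt5)/2$. You also never determine the deficit the extra term must cover. Measured against your displayed bound $n+k-2-2e(X)$, the deficit is $k-1+2e(X)$, and $4e(X)$ does not cover it once $k-1>2e(X)$. Your first fallback does not help either, because an independent minimum vertex cover need not exist. In the double star whose two adjacent centres each carry two leaves, the only minimum vertex cover is the pair of centres.

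The repair is to run your refined count without assuming $e(X)=0$. In general $\sum_y d(y)=n-1-e(X)$, so $\sum_y(d(y)-1)=k-1-e(X)$ and $\sum_y d(y)^2\ge n+2k-3-3e(X)$. The deficit is therefore exactly $3e(X)$, and
\[
\rho^2k\;\ge\;2\rho\,e(X)+\sum_{y}d(y)^2\;\ge\;n+2k-3+e(X)\,(2\rho-3).
\]
So you only need $\rho>3/2$ when $e(X)>0$. In that case $|X|\ge 2$, so $k\ge2$, the tree contains two disjoint edges and hence a $P_4$, and $\rho\ge(1+\sqrt5)/2>3/2$. This is exactly how the paper closes the argument.

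The strict inequality $2\rho-3>0$ is also what justifies your parenthetical claim that equality forces $A_{XX}=0$. Once that is in place, your equality analysis goes through: $d(y)\in\{1,2\}$, and $M$ has constant row sums $2d_{T_1}(x_i)+f_i$.
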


The following proposition shows the tightness of the lower bound in Theorem~\ref{thm:main}.
\begin{proposition}\label{prop:extremal}
Let \(k,n\) be positive integers such that \(k\mid (n-3)\) and \(n\ge 2k\). Then for any graph  \(G\in\mathcal{T}_{n,k}\),  we have
\[
\rho(G)=\sqrt{\frac{n+2k-3}{k}}.
\]
\end{proposition}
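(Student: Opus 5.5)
Since every $G\in\mathcal{T}_{n,k}$ is a tree, it is connected with nonnegative irreducible adjacency matrix. The plan is therefore to exhibit an explicit positive eigenvector of $A(G)$ with eigenvalue $\sqrt{c}$, where $c=(n+2k-3)/k$. By Perron--Frobenius, a positive eigenvector belongs to $\rho(G)$, so this gives $\rho(G)=\sqrt{c}$. Equivalently, we can exhibit a positive eigenvector of $A(G)^2$ with eigenvalue $c$; the bipartite structure then makes $\sqrt{c}$ the spectral radius.

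The natural ansatz exploits the three vertex classes $X$, $Y$ and the set $L$ of pendant leaves. Set $\rho=\sqrt{c}$ and
\[
x_v=1 \ (v\in X),\qquad x_{y_e}=\frac{2}{\rho}\ (y_e\in Y),\qquad x_\ell=\frac{1}{\rho}\ (\ell\in L).
\]
Each $y_e$ has exactly two neighbours, both in $X$, so $(Ax)_{y_e}=2=\rho\, x_{y_e}$. Each leaf $\ell$ has one neighbour in $X$, so $(Ax)_\ell=1=\rho\, x_\ell$. Each $x_i$ has $d_{T_1}(x_i)$ neighbours in $Y$ and $f_i$ leaf neighbours, so
\[
(Ax)_{x_i}=\frac{2d_{T_1}(x_i)+f_i}{\rho}=\frac{c}{\rho}=\rho=\rho\, x_{x_i},
\]
using the defining relation $2d_{T_1}(x_i)+f_i=c$. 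The vector is positive, which completes the argument.

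It also remains to confirm that $\mathcal{T}_{n,k}$ is well defined, so that the statement is meaningful. First, the vertex count is $k+(k-1)+\sum f_i=n$, since $\sum_i f_i=kc-4(k-1)=n-2k+1$. Second, the matching number must be exactly $k$. The set $X$ is a vertex cover, because every edge of the tree is incident to $X$; hence $\mu\le k$. For the reverse inequality we need a matching of size $k$, i.e.\ one edge at each $x_i$ with distinct partners. If $f_i\ge1$ we can use a leaf at $x_i$. The remaining vertices can be matched through $Y$: root $T_1$ at a vertex and match each non-root $x_i$ to the subdivision vertex on the edge to its parent, which gives $k-1$ distinct edges. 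The root can then take a pendant leaf if it has one. Otherwise $2d_{T_1}(\text{root})=c$; rooting $T_1$ at a vertex with $f\ge1$ avoids this, and such a vertex exists because $\sum_i f_i=n-2k+1\ge1$ when $n\ge 2k$. So $\mu=k$.

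The only mildly delicate step is this matching-number check; the eigenvalue computation itself is immediate.
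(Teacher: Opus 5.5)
Your proof is correct and is essentially the paper's: the same positive vector ($1$ on $X$, $2/\sqrt{c}$ on $Y$, $1/\sqrt{c}$ on the leaves), the same three local eigen-equation checks, and Perron--Frobenius. Your matching-number check differs from the paper's, which repeatedly peels leaves from the tree $G[X\cup Y\cup\{w\}]$; you instead root $T_1$ at a vertex with $f_i\ge 1$ and match every other $x_i$ to the subdivision vertex on its parent edge, which is equally valid and somewhat cleaner.
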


\begin{proof}
Let \(L\) be the set of leaves of $G$. Then the vertex set of $G$ has a partition  \(X,\ Y,\ L\) .  Hence the adjacency matrix is
\[
A(G)=\begin{pmatrix}
0 & B & C\\
B^{\mathsf T} & 0 & 0\\
C^{\mathsf T} & 0 & 0
\end{pmatrix}.
\]
Let \(\lambda=\sqrt{c}\) and define a vector \(\mathbf{v}\) by
\[
\mathbf{v}(x)=1\;(x\in X),\qquad \mathbf{v}(y)=\frac{2}{\lambda}\;(y\in Y),\qquad \mathbf{v}(\ell)=\frac{1}{\lambda}\;(\ell\in L).
\]
Compute \(A(G)\mathbf{v}\):
\begin{itemize}
\item For a leaf \(\ell\in L\) (adjacent to a unique \(x_i\in X\)):
\[
(A(G)\mathbf{v})_\ell = \mathbf{v}(x_i)=1 = \lambda\cdot\frac{1}{\lambda}=\lambda \mathbf{v}(\ell).
\]
\item For \(y\in Y\) (adjacent to two vertices \(x_i,x_j\in X\)):
\[
(A(G)\mathbf{v})_y = \mathbf{v}(x_i)+\mathbf{v}(x_j)=1+1=2 = \lambda\cdot\frac{2}{\lambda}=\lambda \mathbf{v}(y).
\]
\item For \(x_i\in X\): its neighbours consist of \(d_{T_1}(x_i)\) vertices in \(Y\) and \(f_i\) leaves, hence
\[
(A(G)\mathbf{v})_{x_i}=d_{T_1}(x_i)\cdot\frac{2}{\lambda}+f_i\cdot\frac{1}{\lambda}
= \frac{2d_{T_1}(x_i)+f_i}{\lambda}=\frac{c}{\lambda}=\lambda = \lambda \mathbf{v}(x_i).
\]
\end{itemize}
Thus \(A(G)\mathbf{v}=\lambda\mathbf{v}\), i.e., \(\lambda\) is an eigenvalue of \(G\).

Since \(G\) is a connected tree, its adjacency matrix is nonnegative and irreducible. All components of \(\mathbf{v}\) are positive (because \(\lambda>0\)), so by the Perron‑Frobenius theorem the eigenvalue corresponding to a positive eigenvector must be the spectral radius (the largest eigenvalue). Hence \(\rho(G)=\lambda=\sqrt{(n+2k-3)/k}\). This shows that the construction attains the lower bound exactly.

By the way, it is easy to see that the matching number of the constructed graph is $k$: since the graph is a tree, hence bipartite, the matching number equals the minimum vertex cover size, which is at most $|X|=k$. Considering the spanning subgraph induced by $X$ and $Y$, because $n > 2k-1$, there exists some $x$ with attached leaves. 
Take a leaf $w$ adjacent to $x_w\in X$. Then $X\cup Y\cup\{w\}$ induces a tree $T_{2k}$ on $2k$ vertices. Since a tree has at least two leaves, $T_{2k}$ contains a leaf $x_{2k}\in X$. Let $y_{2k}$ be the support vertex of $x_{2k}$. Then $y_{2k}\in Y$. Note that $d_{T_{2k}}(y_{2k})=2$. 
Hence deleting $\{x_{2k},y_{2k}\}$ from $T_{2k}$ does not affect connectivity of the remaining vertices. Let $T_{2k-2}=T_{2k}-\{x_{2k},y_{2k}\}$. The leaves of $T_{2k-2}$ can only be $w$ or elements of the set $X$. Repeatedly applying this operation yields $k$ pairwise disjoint edges, so the matching number is exactly $k$.



\end{proof}



\subsection{Connection to the Aouchiche--Hansen conjecture}


Aouchiche and Hansen~\cite{Aouchiche2010} based on extensive computer experiments, conjectured that for any connected graph $G$ on $n \ge 3$ vertices with matching number $k$,
\[
  \rho(G) + k \ge \sqrt{n-1} + 1.
\]
The first counterexample was found by Stevanovi\`c ~\cite{Stevanovic10}  using  AutoGraphiX and further constructions were given by Wagner~\cite{Wagner2021} using neural-network-guided search. While these disprove the conjecture, they leave open the question of the true asymptotic order of $\rho + k$.

Our lower bound settles this question.

\begin{corollary}\label{cor:rho+k}
For any connected graph $G$ of order $n$ with matching number $k \ge 2$, or  $k=1$ and $n \ge 20$ 
\begin{equation}\label{eq:cor}
  \rho(G) + k \ge 3\sqrt[3]{\frac{n}{4}}.
\end{equation}
Moreover, this bound is tight up to a constant factor: there exist graphs in $\mathscr{G}_{n,k}$ with $\rho + k = \Theta(n^{1/3})$.
\end{corollary}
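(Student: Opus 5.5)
We derive Corollary~\ref{cor:rho+k} from Theorem~\ref{thm:main} by a one-variable minimization in $k$, followed by an explicit construction for tightness. By Theorem~\ref{thm:main},
\[
\rho(G)+k \ \ge\ \sqrt{\frac{n+2k-3}{k}}+k .
\]
For $k\ge 2$ we have $2k-3\ge 1>0$, so $\rho(G)+k\ge \sqrt{n/k}+k$. Write $\sqrt{n/k}=\tfrac12\sqrt{n/k}+\tfrac12\sqrt{n/k}$ and apply AM--GM to the three positive terms $\tfrac12\sqrt{n/k}$, $\tfrac12\sqrt{n/k}$ and $k$:
\[
\sqrt{\tfrac{n}{k}}+k \ \ge\ 3\sqrt[3]{\tfrac14\cdot\tfrac{n}{k}\cdot k} \ =\ 3\sqrt[3]{n/4}.
\]
This proves \eqref{eq:cor} for $k\ge2$. (Equality in AM--GM would require $k=\tfrac12\sqrt{n/k}$, i.e. $k^3=n/4$. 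So the bound is essentially attained near $k\approx (n/4)^{1/3}$, which already points to the tightness construction.)

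For $k=1$ a connected graph with matching number $1$ is a star $K_{1,n-1}$ or a triangle. When $n\ge 4$ it must be the star, since any non-star connected graph on at least $4$ vertices contains two disjoint edges. Hence $\rho(G)+k=\sqrt{n-1}+1$. Alternatively, Theorem~\ref{thm:main} gives the same bound $\sqrt{n-1}+1$ directly. It then suffices to check $\sqrt{n-1}+1\ge 3(n/4)^{1/3}$ for $n\ge 20$. Substituting $t=n^{1/6}$, the left side grows like $t^3$ and the right side like $t^2$. So one verifies the inequality at $n=20$ numerically ($\sqrt{19}+1\approx5.36$ versus $3\cdot 5^{1/3}\approx 5.13$) and then shows the difference is increasing for $n\ge 20$ by comparing derivatives: $\tfrac{1}{2\sqrt{n-1}}\ge (4n^2)^{-1/3}\cdot\ldots$, a routine check. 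This small-$n$ threshold is the only fiddly step, and it is purely computational.

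For tightness, take $k=\lceil n^{1/3}\rceil$ and any tree in $\mathcal{T}_{n',k}$, where we adjust so that $k\mid(n-3)$ by choosing $n$ in the right residue class. For general $n$, we instead use a caterpillar-type tree as in Construction~\ref{prop:extremal} with the leaf counts $f_i$ differing by at most one. For the path choice of $T_1$, the condition $2d_{T_1}(x_i)\le c$ only needs $c\ge4$, which holds since $c\approx n^{2/3}$. Proposition~\ref{prop:extremal} (or a Perron-vector perturbation/interlacing argument in the non-divisible case, e.g. taking the graph as a subgraph of a member of $\mathcal{T}_{n'',k}$ with $n''\le n+k$) gives
\[
\rho\le\sqrt{(n+3k)/k}=O(n^{1/3}),
\]
hence $\rho+k=\Theta(n^{1/3})$. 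The main care point is ensuring that the matching number stays exactly $k$ in the non-divisible case, which follows as in the proof of Proposition~\ref{prop:extremal} since $X$ remains a vertex cover and the same greedy matching argument applies.
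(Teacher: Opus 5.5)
Your proposal is correct and follows the paper's proof. The paper applies AM--GM to $\rho/2,\rho/2,k$ and then uses $\rho^2k\ge n+2k-3\ge n$, which matches your bounding $\rho\ge\sqrt{n/k}$ first; both proofs reduce $k=1$ to $\sqrt{n-1}+1\ge 3\sqrt[3]{n/4}$ for $n\ge 20$ and get tightness from $\mathcal{T}_{n,k}$ with $k$ of order $n^{1/3}$. Your reason $2k-3\ge 1$ is the correct justification for $n+2k-3\ge n$, and your leaf-deletion treatment of non-divisible $n$ spells out the paper's Corollary~\ref{thm:sharp} argument, noting that $n\ge 2k$ leaves a pendant leaf so the matching number stays $k$.
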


\begin{proof}
From Theorem~\ref{thm:main} we have $\rho^2 \ge (n+2k-3)/k$. By the AM--GM inequality,
\[
  \rho + k = \frac{\rho}{2} + \frac{\rho}{2} + k \ge 3\sqrt[3]{\frac{\rho^2 k}{4}}
  \ge 3\sqrt[3]{\frac{n+2k-3}{4}} \ge 3\sqrt[3]{\frac{n}{4}}
\]
for $n \ge 2k-1$ (which ensures $n+2k-3 \ge n$).

For $k=1$, we have $\rho = \sqrt{n-1}$, and the inequality $\sqrt{n-1} + 1 \ge 3\sqrt[3]{n/4}$ holds for all $n \ge 20$ by direct verification.

To see that the order $n^{1/3}$ is attained, take  $k = \lfloor \sqrt[3]{\frac{n}{4}} \rfloor$ and apply Proposition~\ref{prop:extremal} (after adjusting $n$ to satisfy the divisibility condition). Then $\rho \approx \sqrt{n/k} \approx \sqrt[3]{2n}$, 
so $\rho + k \approx 3\sqrt[3]{\frac{n}{4}} = \Theta(n^{1/3})$.
\end{proof}

Thus $\rho + k = \Theta(n^{1/3})$, which is strictly smaller than the $\Theta(\sqrt{n})$ order implied by the Aouchiche--Hansen conjecture. This confirms that the star graph $K_{1,n-1}$, while extremal for $k=1$, does not represent the general behavior when $k$ grows with $n$.




The rest of this article is arranged as follows. We present the proof of Theorem~\ref{thm:main} in Section 2. Some remarks are given in the last section.

\section{Proof of the main theorem}

The proof of Theorem~\ref{thm:main} proceeds in three steps. We first reduce to the case of trees, then set up a block matrix argument using a minimum vertex cover, and finally derive the bound via a Schur complement computation.


The key observation is that deleting edges can only decrease the spectral radius, and we can always find a spanning tree that preserves the matching number. {In fact, Liu, Lou and Trevisan~\cite{Liu2026} had proved the lemma as follows:

\begin{lemma}\label{lem:tree}~\cite{Liu2026}
Let $G$ be a connected graph of order $n$ with matching number $k$. Then there exists a spanning tree $T$ of $G$ with matching number $k$. Consequently, $\rho(G) \ge \rho(T)$.
\end{lemma}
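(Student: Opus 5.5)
The approach is to build a spanning tree that contains a maximum matching of $G$ from the outset, and then to compare spectral radii by entrywise monotonicity. Fix a maximum matching $M$ of $G$, so that $|M| = k$. Because the edges of $M$ are pairwise disjoint, the spanning subgraph $(V(G), M)$ is a forest: it consists of $k$ disjoint copies of $K_2$ together with $n-2k$ isolated vertices.

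Next, extend this forest to a spanning tree of $G$, which is possible because $G$ is connected. The standard greedy argument works as follows. Start from $F_0 = M$. While the current forest $F_i$ has more than one component, connectivity of $G$ gives an edge of $G$ joining two different components of $F_i$. Adding that edge keeps the edge set acyclic and lowers the number of components by one. After exactly $(n-k) - 1$ such additions we obtain a spanning tree $T$ of $G$ with $M \subseteq E(T)$. Equivalently, one can run Kruskal's algorithm with the edges of $M$ placed first in the ordering.

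The matching number of $T$ then follows from two inequalities. Since $M \subseteq E(T)$, we have $\mu(T) \ge |M| = k$. Since $T$ is a subgraph of $G$, every matching of $T$ is a matching of $G$, so $\mu(T) \le \mu(G) = k$. Hence $\mu(T) = k$.

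For the spectral consequence, note that $T$ is a spanning subgraph of $G$, so $0 \le A(T) \le A(G)$ entrywise. Let $\mathbf{x}$ be a nonnegative unit Perron vector of $A(T)$. Then
\[
\rho(T) = \mathbf{x}^{\mathsf T} A(T)\mathbf{x} \le \mathbf{x}^{\mathsf T} A(G)\mathbf{x} \le \rho(G),
\]
where the last step is the Rayleigh quotient bound for the symmetric matrix $A(G)$. No step here is a real obstacle. The only point that needs explicit justification is that an acyclic edge set in a connected graph can always be extended to a spanning tree, and the component-counting argument above supplies it.
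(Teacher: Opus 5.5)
Your proof is correct and takes essentially the paper's approach: the paper does not prove this lemma itself but cites Liu--Lou--Trevisan, remarking only that a matching-preserving spanning tree exists and that deleting edges cannot increase $\rho$. You establish exactly this by extending a maximum matching $M$ to a spanning tree (with the component count $n-k$ and the bound $\mu(T)\le\mu(G)$ both handled correctly) and by applying the Rayleigh quotient to a nonnegative Perron vector of $A(T)$.
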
}




From now on we assume $G$ is a tree. Being a tree, $G$ is bipartite, and by K\"onig's theorem~\cite{Konig1931} the matching number equals the minimum vertex cover size. Let $X$ be a minimum vertex cover of $G$, so $|X| = k$ and $I := V(G) \setminus X$ is an independent set.


The following lemma identifies a small connected ``core'' of $G$.

\begin{lemma}\label{lem:core}
There exists a subset $Y \subseteq I$ with $|Y| \le k-1$ such that the spanning subgraph $G[X \cup Y]$ is connected.
\end{lemma}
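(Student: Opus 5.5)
We need: $G$ is a tree, $X$ a minimum vertex cover with $|X|=k$, and $I=V\setminus X$ independent. We must find $Y\subseteq I$ with $|Y|\le k-1$ such that $G[X\cup Y]$ is connected. (It is an induced subgraph; we read ``spanning'' as ``spanning $X\cup Y$''.)

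The plan is a contraction argument. Since $I$ is independent, every vertex of $I$ has all its neighbours in $X$. Build an auxiliary multigraph $H$ on vertex set $X$: for each $v\in I$ with $d(v)\ge 2$, and for each pair of $X$-neighbours of $v$, record that $v$ ``joins'' them. Equivalently, consider the bipartite-type structure in which $X$-vertices are linked through $I$-vertices, together with the edges of $G[X]$.

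\textbf{Step 1 ($H$ is connected).} Take $x,x'\in X$. The path from $x$ to $x'$ in the tree $G$ alternates between runs inside $X$ and single $I$-vertices, since no two $I$-vertices are adjacent. An interior $I$-vertex of this path has both of its path-neighbours in $X$. Hence the path only uses $X$-vertices and $I$-vertices of degree at least $2$ whose neighbours lie in $X$. So the set $X\cup I'$, where $I'$ is the set of non-leaf vertices of $I$, induces a connected subgraph $G'$ of $G$, and $G'$ is a tree.

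\textbf{Step 2 (prune to at most $k-1$ vertices of $I$).} Iteratively delete from $Y:=I'$ any vertex $y$ whose removal keeps $G[X\cup Y]$ connected. At the end, every $y\in Y$ is a cut vertex of the tree $G[X\cup Y]$. We claim every leaf of this final tree lies in $X$: a leaf $y\in Y$ could be deleted without disconnecting anything, so pruning would have removed it. (A single-vertex tree is the trivial case $k=1$.) Then count via a contraction: contract all edges inside $X$, i.e.\ replace each connected component of $G[X]$ by one node. Each $y\in Y$ has degree at least $2$, and its neighbours lie in distinct $X$-components, since otherwise the tree would contain a cycle. The contracted graph is a tree whose vertices are the $c\le k$ components of $G[X]$ together with $Y$, and whose edges are exactly the incidences of $Y$.

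\textbf{Step 3 (edge count).} In the contracted tree the number of edges is $c+|Y|-1$, and it also equals $\sum_{y\in Y}d(y)\ge 2|Y|$. Hence $2|Y|\le c+|Y|-1$, giving $|Y|\le c-1\le k-1$.

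The main subtlety is making the contraction precise: components of $G[X]$ stay connected, and distinctness of the neighbour components of each $y$ follows from acyclicity of $G$. Step 1, the observation that $I$-vertices on paths between cover vertices have degree at least $2$, is routine. Alternatively, one can argue by induction on $k$, removing a leaf component of $X$ together with its connecting $Y$-vertex.
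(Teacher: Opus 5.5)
Your proof is correct, but it runs in the opposite direction from the paper's. The paper builds $Y$ up greedily. Starting from one component of $G[X]$, it repeatedly adds a vertex of $I$ that joins the current connected set to a not-yet-absorbed vertex of $X$. Each added vertex therefore enlarges the absorbed part of $X$, so at most $k-1$ are needed. That argument never uses acyclicity, so it works for any connected graph in which $X$ is a vertex cover. As written, it insists the connecting vertex be new. This can fail: in the spider with centre $y$ and three legs of length two, the only connector to $x_3$ is the already chosen $y$. The fix is trivial, since that component is then absorbed at no cost.

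You instead prune $I$ down to a minimal connecting set and count edges in the tree obtained by contracting the components of $G[X]$. Here acyclicity is essential, for two reasons. It guarantees that the neighbours of each $y$ lie in distinct components, so no multi-edges arise and $d(y)\ge 2$ survives the contraction. It also gives the edge count $c+|Y|-1$.

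In exchange you get an exact identity, $\sum_{y\in Y}(d_G(y)-2)=c-1-|Y|$, using that $d_{G[X\cup Y]}(y)=d_G(y)$ for $y\in I$. This shows directly that $|Y|=k-1$ forces $c=k$, so $u=0$, and forces $d_G(y)=2$ for every $y\in Y$. That is precisely the structure recorded in conditions (1) and (4) of the equality characterization.

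Two cosmetic points. Step 1 is unnecessary, since you may start pruning from $Y=I$, where $G[X\cup I]=G$ is already connected. The multigraph $H$ is introduced but never used.
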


\begin{proof}
We build $Y$ greedily. Start with an arbitrary vertex $x \in X$ and let $A_0$ be the set of vertices in $X$ reachable from $x$ via paths that stay entirely inside $X$; set $B_0 = \emptyset$. Clearly $A_0 \cup B_0$ induces a connected subgraph, $|A_0| \ge 1$, and $|B_0| = 0$.

Suppose inductively that we have constructed $A_{w-1} \subseteq X$ and $B_{w-1} \subseteq I$ such that $A_{w-1} \cup B_{w-1}$ induces a connected subgraph, $|A_{w-1}| \ge w$, and $|B_{w-1}| \le w-1$. If $A_{w-1} = X$, we are done. Otherwise, since $G$ is connected and $I$ is independent, there exists a vertex $y \in I \setminus B_{w-1}$ adjacent to some $x' \in A_{w-1}$ and some $z \in X \setminus A_{w-1}$. Set $B_w = B_{w-1} \cup \{y\}$ and let $A_w$ be the set of vertices in $X$ reachable from $A_{w-1} \cup \{z\}$ via paths inside $X$. Then $A_w \cup B_w$ is connected, $|A_w| \ge w+1$, and $|B_w| \le w$.

After at most $k-1$ steps we reach $A_g = X$ for some $g \le k-1$, and we set $Y = B_g$.
\end{proof}

Let $L = I \setminus Y$ be the remaining independent vertices. Every vertex in $L$ has degree $1$ in $G[X \cup Y \cup L]$ (otherwise, if there exists a vertex \(l\in L\) such that \(l\) is adjacent to at least two vertices in \(X\), then these two vertices are connected in the induced subgraph \(G[X\cup Y]\), and they are also connected via \(l\), which would form a cycle, a contradiction.), so each $\ell \in L$ is a pendant leaf adjacent to exactly one vertex in $X$.


Order the vertices of $G$ as $X$, $Y$, then $L$. The adjacency matrix takes the block form
\[
  A(G) = \begin{pmatrix}
    A & B & C \\
    B^{\mathsf T} & 0 & 0 \\
    C^{\mathsf T} & 0 & 0
  \end{pmatrix},
\]
where $A = A(G[X])$ is the adjacency matrix of the induced subgraph on $X$, $B$ records the edges between $X$ and $Y$, and $C$ records the edges between $X$ and $L$. The zero blocks reflect the facts that $Y$ and $L$ are independent sets and there are no edges between $Y$ and $L$.


Assume for a contradiction that $\rho(G) < \lambda$, where $\lambda := \sqrt{(n+2k-3)/k}$. Then $\lambda I - A(G)$ is positive definite. Applying the Schur complement with respect to the block $\mathrm{diag}(\lambda I_Y, \lambda I_L)$, we compute
\begin{align*}
  \begin{pmatrix} I_X & \frac{B}{\lambda} & \frac{C}{\lambda} \\ 0 & I_Y & 0 \\ 0 & 0 & I_L \end{pmatrix}
  \begin{pmatrix} \lambda I_X - A & -B & -C \\ -B^{\mathsf T} & \lambda I_Y & 0 \\ -C^{\mathsf T} & 0 & \lambda I_L \end{pmatrix}
  \begin{pmatrix} I_X & 0 & 0 \\ \frac{B^{\mathsf T}}{\lambda} & I_Y & 0 \\ \frac{C^{\mathsf T}}{\lambda} & 0 & I_L \end{pmatrix} \\
  = \begin{pmatrix} \lambda I_X - A - \frac{1}{\lambda}(BB^{\mathsf T} + CC^{\mathsf T}) & 0 & 0 \\ 0 & \lambda I_Y & 0 \\ 0 & 0 & \lambda I_L \end{pmatrix}.
\end{align*}
Since the left-hand side is positive definite (as a congruence of a positive definite matrix), the top-left block must also be positive definite:
\[
  \lambda I_X - A - \frac{1}{\lambda}(BB^{\mathsf T} + CC^{\mathsf T}) \succ 0.
\]
Let $\bj = (1,1,\dots,1)^{\mathsf T} \in \mathbb{R}^k$. We evaluate the quadratic form:
\begin{equation}\label{eq:quad}
  \bj^{\mathsf T}\!\left(\lambda I_X - A - \frac{1}{\lambda}(BB^{\mathsf T} + CC^{\mathsf T})\right)\!\bj > 0.
\end{equation}

We now compute each term separately. Let $u = e(G[X])$ be the number of edges inside $X$.

\paragraph{The $\bj^{\mathsf T} A \bj$ term.}
This is twice the number of edges in $G[X]$, so $\bj^{\mathsf T} A \bj = 2u$.

\paragraph{The $\bj^{\mathsf T} CC^{\mathsf T} \bj$ term.}
Since each leaf $\ell \in L$ is adjacent to exactly one vertex in $X$, the matrix $CC^{\mathsf T}$ is diagonal: $(CC^{\mathsf T})_{ij} = 0$ for $i \ne j$, and $(CC^{\mathsf T})_{ii}$ is the number of leaves attached to the $i$-th vertex of $X$. Hence $\bj^{\mathsf T} CC^{\mathsf T} \bj = \operatorname{tr}(CC^{\mathsf T}) = \sum_{\ell \in L} 1 = |L|$.

\paragraph{The $\bj^{\mathsf T} BB^{\mathsf T} \bj$ term.}
For $i \ne j$, $(BB^{\mathsf T})_{ij}$ counts the number of common neighbors in $Y$ of the $i$-th and $j$-th vertices of $X$. Since $G$ is a tree, any two vertices of $X$ share at most one common neighbor in $Y$ (otherwise a $4$-cycle would exist). Define an auxiliary graph $\widetilde{G}$ on vertex set $X$ by declaring $x_i \sim x_j$ in $\widetilde{G}$ if either $x_i \sim x_j$ in $G[X]$ or $x_i,x_j$ have a common neighbor in $Y$. Then for $i \ne j$,
\[
  (BB^{\mathsf T})_{ij} = A(\widetilde{G})_{ij} - A_{ij},
\]
because $x_i \sim x_j$ in $G[X]$ and $x_i,x_j$ sharing a neighbor in $Y$ cannot both occur (that would create a $3$-cycle). On the diagonal, $(BB^{\mathsf T})_{ii}$ is the degree of the $i$-th vertex of $X$ in $G[X,Y]$.

Let $D$ be the diagonal part of $BB^{\mathsf T}$. Then $BB^{\mathsf T} = A(\widetilde{G}) - A + D$, and
\[
  \bj^{\mathsf T} BB^{\mathsf T} \bj = \bj^{\mathsf T} A(\widetilde{G}) \bj - \bj^{\mathsf T} A \bj + \operatorname{tr}(D).
\]


Since $\operatorname{tr}(D) = \operatorname{tr}(BB^{\mathsf T}) = |E(X,Y)|$ (each edge between $X$ and $Y$ contributes exactly $1$ to the diagonal of $BB^{\mathsf T}$), we obtain
\begin{equation}\label{eq:BB}
  \bj^{\mathsf T} BB^{\mathsf T} \bj = \bj^{\mathsf T} A(\widetilde{G}) \bj - 2u + |E(X,Y)|.
\end{equation}

\paragraph{Assembling the inequality.}
Substituting all terms into \eqref{eq:quad}:
\begin{align*}
   \lambda k - 2u - \frac{1}{\lambda}\bigl(\bj^{\mathsf T} A(\widetilde{G}) \bj - 2u + |E(X,Y)| + |L|\bigr)>0.
\end{align*}
Multiplying by $\lambda$ and rearranging:
\begin{equation}\label{eq:assemble}
  \lambda^2 k > \lambda \cdot 2u + \bj^{\mathsf T} A(\widetilde{G}) \bj - 2u + |E(X,Y)| + |L|.
\end{equation}
We have $|E(X,Y)| + |L| = e(G) - u = n - 1 -u$. Since $|Y| \le k-1$ by Lemma~\ref{lem:core}, and $\widetilde{G}$ is connected (because $G[X \cup Y]$ is connected), we have $\bj^{\mathsf T} A(\widetilde{G}) \bj \ge 2(k-1)$. Also $\lambda \ge 1$ since $n \ge 2k-1$. Hence the right-hand side of \eqref{eq:assemble} satisfies
\begin{align*}
  \lambda \cdot 2u + \bj^{\mathsf T} A(\widetilde{G}) \bj - 2u + n - 1 -u
  &\ge u(2\lambda - 3) + 2(k-1) + n - 1\\
  &\ge u(2\lambda - 3) + n + 2k -3.
\end{align*}
Thus from \eqref{eq:assemble},
\[
  \lambda^2 k > u(2\lambda - 3) + n + 2k -3.
\]
Substituting $\lambda^2 = (n+2k-3)/k$, the inequality becomes
\[
  u(2\lambda - 3) < 0,
\]
If $u = 0$, then $0 > 0$, contradiction. Therefore, $u > 0$. Consequently,  we have
\[
  2\lambda-3 <0.
\]
When $k \geq 2$,  $T$ must contain a path of length $3$.
Since 
\[
  \rho(\text{path of length } r) = 2\cos\left(\frac{\pi}{r+2}\right),
\]
we have 
\[
  \lambda \geq \rho(G) \geq \rho(\text{path of length }3) = \frac{\sqrt{5}+1}{2} > \frac{3}{2}, \text{ a contradiction.}
\]
When $k = 1$,  the graph is a star $K_{1,n-1}$, $\rho(G) = \sqrt{n-1}$,  a contradiction again.

 In summary, by contradiction we obtain
\[
  \rho(G) \geq \sqrt{\frac{n + 2k - 3}{k}}
\]


\paragraph{Characterization of equality.}

Assume equality holds in \eqref{eq:main}. Tracing back through the proof, all inequalities must become equalities:
\begin{enumerate}
    \item[(1)] $u = 0$, so $X$ is an independent set.
    \item[(2)] $|Y| = k-1$, so $\widetilde{G}$ is a tree.
    \item[(3)] $\bj$ is a Perron eigenvector of $BB^{\mathsf T} + CC^{\mathsf T}$, meaning that for every $x \in X$, the quantity $2d_{G[X,Y]}(x) + d_{G[X,L]}(x)$ is constant.
    \item[(4)] No vertex in $Y$ is adjacent to three or more vertices of $X$ (otherwise $\widetilde{G}$ would contain a triangle, contradicting that it is a tree). Hence every vertex in $Y$ is adjacent to exactly two vertices of $X$.
\end{enumerate}
Conditions (1)--(4) precisely describe the construction of Proposition~\ref{prop:extremal}: $\widetilde{G}$ is a tree $T_1$ on $X$, each edge of $T_1$ is subdivided by a vertex of $Y$, and pendant leaves are attached to vertices of $X$ to equalize the weighted degree. The equality condition $k \mid (n-3)$ follows from the requirement that $(n+2k-3)/k$ be an integer.

Conversely, every graph constructed in Proposition~\ref{prop:extremal} satisfies these conditions and achieves equality. This gives a complete characterization.

\section{Conclusion}
We have established a sharp lower bound $\rho(G) \ge \sqrt{(n+2k-3)/k}$ for the spectral radius of connected graphs with matching number $k$, valid for all $n \ge 2k-1$. The bound is tight whenever $k \mid (n-3)$, and we have characterized all extremal graphs: they are obtained by subdividing the edges of an arbitrary tree on $k$ vertices and then attaching pendant leaves to equalize degrees. 

Our result answers Problem~\ref{P1} for all $n \ge 2k-1$ with $k \mid (n-3)$. For general $n$ and $k$, we have the following corollary from Theorem~\ref{thm:main}.

\begin{corollary}\label{thm:sharp}
Let $G \in \mathscr{G}_{n,k}$ and let $n-3\equiv r\pmod k$. Then
\[
  \min_{G \in \mathscr{G}_{n,k}} \rho(G) < \sqrt{\frac{n + 3k - 3 - r}{k}}.
\]

\end{corollary}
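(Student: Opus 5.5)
Since $n-3\equiv r \pmod k$ with $0\le r\le k-1$, put $n' := n+k-r$. Then $n'-3 = (n-3)+(k-r)$ is divisible by $k$, and
\[
\frac{n'+2k-3}{k}=\frac{n+3k-3-r}{k}.
\]
The plan is to produce one graph $H\in\mathscr{G}_{n,k}$ with $\rho(H)<\sqrt{(n+3k-3-r)/k}$. Any such witness gives the claimed strict upper bound on the minimum.

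The construction starts from an extremal tree $T'\in\mathcal{T}_{n',k}$. By Proposition~\ref{prop:extremal}, $\rho(T')=\sqrt{(n'+2k-3)/k}$. Then delete $k-r\ge 1$ pendant leaves attached to vertices of $X$ to get a tree $H$ on $n$ vertices. Deleting a leaf keeps the graph connected. Because $T'$ is connected, deleting vertices strictly lowers the spectral radius (Perron--Frobenius for irreducible matrices), so $\rho(H)<\rho(T')$, which is exactly the required inequality.

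It remains to check that the construction is legitimate.
\begin{itemize}
\item \textbf{Enough leaves.} In $T'$ we have $\sum f_i = n'-2k+1 = n-k-r+1$, so we need $n-k-r+1\ge k-r$, i.e.\ $n\ge 2k-1$. This always holds, since a graph with matching number $k$ has $n\ge 2k$.
\item \textbf{Matching number stays $k$.} Deleting vertices cannot increase the matching number, and $X$ remains a vertex cover of $H$, so $\mu(H)\le k$. For the reverse inequality, the argument in Proposition~\ref{prop:extremal} only needs the tree on $X\cup Y$ together with one leaf $w$. So if at least one leaf survives, $\mu(H)=k$. A leaf survives exactly when $n-k-r+1>k-r$, i.e.\ $n\ge 2k$, which we have.
\end{itemize}

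The main obstacle is whether $\mathcal{T}_{n',k}$ is nonempty, i.e.\ whether some tree $T_1$ on $k$ vertices has $2d_{T_1}(x_i)\le c'$ for every $i$, where $c'=(n'+2k-3)/k$.
\begin{itemize}
\item For $k\ge 3$ we can take $T_1$ to be a path, which needs $c'\ge 4$. This holds because $n'\ge 2k+1$ gives $c'\ge (4k-2)/k>3$, and $c'$ is an integer.
\item For $k\le 2$ the path has maximum degree at most $1$, so we only need $c'\ge 2$, which is immediate.
\end{itemize}
In the boundary cases the verification is mildly delicate, and I would double-check them by hand.
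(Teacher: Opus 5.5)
Your proof is correct and follows the same route as the paper: take an extremal tree in $\mathcal{T}_{n+k-r,k}$, delete $k-r$ pendant leaves, and use that $\rho$ strictly decreases when passing to a proper subgraph of a connected graph. Your extra checks (that $\mathcal{T}_{n',k}$ is nonempty via a path $T_1$, that there are enough leaves, and that one surviving leaf keeps the matching number at $k$) fill in details the paper leaves implicit, and they all go through, so the closing hedge about boundary cases is unnecessary.
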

\begin{proof}
Note that $k \mid (n + k - r - 3)$.  Then the proof is obtained by deleting $k - r$ pendant leaves from an extremal graph of order $n + k - r$.    
\end{proof}

Based on  our computational experiments, we present a general extremal construction for general $n$ and $k$.
\begin{construction}[General extremal family $\mathcal{T'}_{n,k}$]\label{const:general}
Let $k$ and $n$ be positive integers with $n \ge 2k$. Define 
\[
c := \left\lfloor \frac{n+2k-3}{k} \right\rfloor.
\]
Let $T_1$ be a tree on vertex set $X=\{x_1,\dots,x_k\}$ for which there exist nonnegative integers $f_1,\dots,f_k$ satisfying
\[
2d_{T_1}(x_i)+f_i \in \{c,\, c+1\} \quad \text{for all } i=1,\dots,k.
\]
We construct a family of trees $\mathcal{T'}_{n,k}$ as follows:
\begin{enumerate}
    \item Subdivide every edge of $T_1$ by inserting a new vertex into each edge. 
    \item Attach $f_i$ pendant leaves to each $x_i\in X$, where the $f_i$'s are chosen as above.
\end{enumerate}
The resulting graph is a tree of order $n$ with matching number $k$ (as in the proof of Construction 2). 
\end{construction}
Our computational experiments suggest that for general $n$ and $k$, all spectrally minimal  graphs in $\mathcal{G}_{n,k}$ are contained in $\mathcal{T'}_{n,k}$.
This leads to the following question: for arbitrary $n$ and $k$, must every spectrally minimal graph in $\mathcal{G}_{n,k}$ belongs to $\mathcal{T'}_{n,k}$. 

\vspace{5pt}
\noindent{\bf Acknowledgements}:
This work was supported by the National Key Research and Development Program of China (2023YFA1010203), the National Natural Science Foundation of China (12471336), and Quantum Science and Technology-National Science and Technology Major Project (2021ZD0302902).

\end{document}